\begin{document}

 \newtheorem{thm}{Theorem}[section]
 \newtheorem{cor}[thm]{Corollary}
 \newtheorem{lem}[thm]{Lemma}{\rm}
 \newtheorem{prop}[thm]{Proposition}

 \newtheorem{defn}[thm]{Definition}{\rm}
 \newtheorem{assumption}[thm]{Assumption}
 \newtheorem{rem}[thm]{Remark}
 \newtheorem{ex}{Example}
\numberwithin{equation}{section}
\def\la{\langle}
\def\ra{\rangle}
\def\glexe{\leq_{gl}\,}
\def\glex{<_{gl}\,}
\def\e{{\rm e}}

\def\x{\mathbf{x}}
\def\P{\mathbf{P}}
\def\h{\mathbf{h}}
\def\by{\mathbf{y}}
\def\bz{\mathbf{z}}
\def\F{\mathcal{F}}
\def\R{\mathbb{R}}
\def\T{\mathbf{T}}
\def\N{\mathbb{N}}
\def\D{\mathbf{D}}
\def\V{\mathbf{V}}
\def\U{\mathbf{U}}
\def\K{\mathbf{K}}
\def\Q{\mathbf{Q}}
\def\H{\mathbf{H}}
\def\M{\mathbf{M}}
\def\oM{\overline{\mathbf{M}}}
\def\O{\mathbf{O}}
\def\C{\mathbb{C}}
\def\P{\mathbf{P}}
\def\Z{\mathbb{Z}}
\def\H{\mathcal{H}}
\def\A{\mathbf{A}}
\def\V{\mathbf{V}}
\def\AA{\overline{\mathbf{A}}}
\def\B{\mathbf{B}}
\def\c{\mathbf{C}}
\def\L{\mathcal{L}}
\def\bS{\mathbf{S}}
\def\H{\mathbf{H}}
\def\I{\mathbf{I}}
\def\Y{\mathbf{Y}}
\def\X{\mathbf{X}}
\def\G{\mathbf{G}}
\def\f{\mathbf{f}}
\def\z{\mathbf{z}}
\def\v{\mathbf{v}}
\def\y{\mathbf{y}}
\def\d{\hat{d}}
\def\bx{\mathbf{x}}
\def\bI{\mathbf{I}}
\def\y{\mathbf{y}}
\def\g{\mathbf{g}}
\def\w{\mathbf{w}}
\def\b{\mathbf{b}}
\def\a{\mathbf{a}}
\def\u{\mathbf{u}}
\def\q{\mathbf{q}}
\def\e{\mathbf{e}}
\def\s{\mathcal{S}}
\def\cc{\mathcal{C}}
\def\co{{\rm co}\,}
\def\tg{\tilde{g}}
\def\tx{\tilde{\x}}
\def\tg{\tilde{g}}
\def\tA{\tilde{\A}}

\def\supmu{{\rm supp}\,\mu}
\def\supp{{\rm supp}\,}
\def\cd{\mathcal{C}_d}
\def\cok{\mathcal{C}_{\K}}
\def\cop{COP}
\def\vol{{\rm vol}\,}
\def\om{\mathbf{\Omega}}
\def\blue{\color{blue}}
\def\red{\color{red}}
\def\blambda{\boldsymbol{\lambda}}
\def\balpha{\boldsymbol{\alpha}}
\def\dis{\displaystyle}
\def\1{\boldsymbol{1}}
\def\va{\vert\balpha\vert}
\title{An extension of the Mean Value Theorem}
\author{Jean B. Lasserre}
\address{LAAS-CNRS and Toulouse School of Economics (TSE)\\
LAAS, 7 avenue du Colonel Roche\\
31077 Toulouse C\'edex 4, France\\
Tel: +33561336415}
\email{lasserre@laas.fr}

\date{}

\begin{abstract}
Let $(\om,\mu)$ be a measure space with $\om\subset\R^d$ and 
$\mu$ a finite measure on $\om$.
We  provide an extension of the  Mean Value Theorem (MVT) in the form
 $\int_\om fd\mu =\mu(\om)(a\,f(\x_0)+(1-a)\,f(\x_1))$, 
 with $a\in [0,1]$ and $\x_0,\x_1\in \om$. It is valid for non compact sets $\om$ and 
 $f$ is only required to be integrable with respect to $\mu$.
 It also contains as a special case the MVT in the form
 $\int f\,d\mu = \mu(\om) f(\x_0)$ for some $\x_0\in\om$,
 valid for compact  connected set $\om$ and continuous $f$.
 It is a direct consequence of Richter's theorem which in turn is  a non
 trivial (overlooked) generalization of Tchakaloff's theorem, and even published earlier.
\end{abstract}
\maketitle
\section{Introduction}

The Mean Value Theorem (MVT) is quite fundamental and widely known and 
covered in most textbooks in real analysis.  It states that with a compact connected set $\om\subset\R^d$,
a continuous function $f:\om\to\R$, and a finite Borel measure $\mu$ on $\om$, there exists $\x_0\in\om$ such that 
\begin{equation}
\label{mvt-basic}
\int f\,d\mu =f(\x_0)\,\mu(\om)\,.
\end{equation}
\begin{proof}
$\int_\om f\,d\mu\in [f_*\,\mu(\om)\,,\,f^*\,\mu(\om)]$, where
 $f_*=\min_{\x\in\om}f(\x)$, $f^*=\max_{\x\in\om}f(\x)$. In addition, as $\om$ is connected and $f$
 is continuous, $f(\om)=[f_*\,,\,f^*]$. Suppose not, i.e., the exists 
 $a\in [f_*\,,\,f^*]$ such that $f(\x)\neq a$ for all $\x\in\om$. Then
 by continuity of $f$, the sets $A:=f^{-1}([f_*,a])$ and $B:=f^{-1}([a,f^*])$ are closed and disjoint,
 and $\om\subset A\cup B$. But $\om\cap A\neq\emptyset$ and $\om\cap B\neq\emptyset$
 implies that $\om$ is not connected, a contradiction. Hence 
 $f(\om)=[f_*\,,\,f^*]$, which in turn implies that
 there exists $\x^*\in\om$ such that $\int f\,d\mu=f(\x^*)\mu(\om)$. 
 \end{proof}
 
This note is concerned with a non-trivial extension
of the MVT which follows from Richter's theorem, a result in real analysis
%the (generalized) moment problem 
that has been overlooked in the literature on the Moment problem.
Indeed, although our contribution is a direct and
easy consequence of Richter's theorem,  %(see e.g. \cite[Theorem 1.24, p.23]{schmudgen}, 
to the best of our knowledge it has not appeared in the literature, 
at least in this form.
%, and this the reason why we have submitted the result to this journal.

Therefore, considering the importance of the MVT and its restrictions of compactness and continuity
to be applicable, we think that in view of its simplicity and generality,
its extension is potentially useful in many settings
where the classical MVT fails. As we next see, the extension is indeed valid in a quite
general context. 

This note is organized as follows. We first state Richter's theorem and Tchakaloff's theorem in real analysis on the moment problem and
provide historical details mostly found in \cite{math-anal} and \cite{schmudgen} where the fact that Richter's theorem has indeed been overlooked
is also mentioned.  Then we state our main result on the extension of the MVT is  a context that is 
far more general than its standard version. An elementary example is provided to illustrate 
the result.
\section{Main result}

\vspace{.2cm}

\subsection*{Richter's theorem}

Let $(\om,\mu)$ be a measure space  and denote by $L^1(\om,\mu)$ the Lebesgue space of real integrable functions with respect to $\mu$. Denote also by $M_+(\om)$ the space of Radon measures on $\om$, and by $\delta_\x$ the Dirac measure at the point $\x\in\om$.

In its simplest and most accessible version taken from \cite{schmudgen}, Richter's theorem (called Richter-Tchakaloff theorem in \cite{schmudgen})
reads as follows

\begin{thm}(\cite[Theorem 1.24]{schmudgen})
\label{th-Richter}
Suppose that $(\om,\mu)$ is a measure space, $V$ is a finite-dimensional linear subspace of $L^1(\om,\mu)$, and $L^\mu$ denotes the linear functional on $V$ defined by
\[L^\mu(f)=\int f\,d\mu\,,\quad \forall f\in V\,.\]
Then there is a $k$-atomic measure $\nu=\sum_{j=1}^k m_j\,\delta_{\x_i}\in M_+(\om)$, where
$k\leq \mathrm{dim}(V)$, such that $L^\mu=L^\nu$, that is:
\begin{equation}
 \label{th-Richter-1}
 \int f \,d\mu\,=\,\int f\,d\nu\,=\,\sum_{j=1}^k m_j\,f(\x_j)\,,\quad \forall f\,\in\,V\,.
\end{equation}
\end{thm}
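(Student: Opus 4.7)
The plan is to reduce Richter's theorem to Carathéodory's theorem on conic hulls in $\R^n$, where $n := \dim V$. Fix a basis $f_1,\dots,f_n$ of $V$ and introduce the moment map $\Phi:\om\to\R^n$, $\Phi(\x):=(f_1(\x),\dots,f_n(\x))$, together with the moment vector $m := (L^\mu(f_1),\dots,L^\mu(f_n))\in\R^n$. The whole theorem follows once one establishes the key claim that $m$ belongs to the convex conic hull $C:=\mathrm{cone}(\Phi(\om))$; indeed, Carathéodory's theorem for cones in $\R^n$ then produces $\x_1,\dots,\x_k\in\om$ and weights $m_1,\dots,m_k\geq 0$ with $k\leq n$ such that $m=\sum_{j=1}^k m_j\,\Phi(\x_j)$, and the atomic measure $\nu:=\sum_j m_j\,\delta_{\x_j}$ matches $L^\mu$ on every basis element and hence on all of $V$.

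To prove the key claim I would argue by induction on $n=\dim V$, with the case $n=0$ trivial. For the inductive step, consider the following dichotomy. \emph{Case A}: there exists $g\in V\setminus\{0\}$ with $g\geq 0$ on $\om$ and $\int g\,d\mu=0$. Then $g=0$ $\mu$-almost everywhere, so $\mu$ is concentrated on $\om':=\{g=0\}$, and the restriction map $V\to V|_{\om'}$ has nontrivial kernel (it contains $g$), so $\dim V|_{\om'}<n$. The inductive hypothesis applied to $\om'$ and $V|_{\om'}$ yields an atomic measure with at most $n-1$ atoms which also represents $L^\mu$ on all of $V$. \emph{Case B}: no such $g$ exists. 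Suppose for contradiction that $m\notin C$. The standard hyperplane separation theorem in $\R^n$ yields $c\in\R^n\setminus\{0\}$ with $\langle c,\Phi(\x)\rangle\geq 0$ for every $\x\in\om$ and $\langle c,m\rangle\leq 0$. Setting $g:=\sum_i c_i f_i\in V$ gives $g\geq 0$ pointwise and $\int g\,d\mu=\langle c,m\rangle\leq 0$, which forces $\int g\,d\mu=0$. Case B's hypothesis then forces $g=0$ as an element of $V$, and linear independence of the basis forces $c=0$, a contradiction.

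The main obstacle is precisely the looseness of the separating inequality: since the conic hull $C$ need not be closed, the separation theorem yields only $\langle c,m\rangle\leq 0$ rather than strict inequality, so one cannot derive $\int g\,d\mu<0$ directly. The dichotomy above is exactly what bypasses this: the equality case in the separating inequality is the setting in which one can shrink the support of $\mu$ and the ambient vector space simultaneously, opening the door for induction. A secondary technical point is the interpretation of $V$ as a space of honest functions (rather than $\mu$-equivalence classes) so that the evaluations $f\mapsto f(\x_j)$ are well defined; this is a choice of measurable representatives and does not affect the structure of the argument.
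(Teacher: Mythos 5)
The paper does not actually prove this theorem: it is Richter's theorem, quoted from Schm\"udgen's book (Theorem 1.24) and used as a black box, so the only fair benchmark is the standard proof in the cited literature. Measured against that, your argument is correct and is essentially the known proof: you reduce the statement to the claim that the moment vector $m=(\int f_1\,d\mu,\dots,\int f_n\,d\mu)$ lies in $C=\mathrm{cone}(\Phi(\om))$, invoke the conic Carath\'eodory theorem for the bound $k\le n=\dim V$, and handle the one genuine difficulty --- that $C$ need not be closed, so separation only yields $\langle c,m\rangle\le 0$ --- by the dichotomy between the degenerate case (some $g\in V\setminus\{0\}$ with $g\ge 0$ on $\om$ and $\int g\,d\mu=0$, where $\mu$ concentrates on $\{g=0\}$, the restricted space drops dimension, and induction applies) and the nondegenerate case (where equality in the separation forces $g\equiv 0$, hence $c=0$, a contradiction). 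This dimension-reduction-plus-separation mechanism is the same device used in Richter's, Schm\"udgen's, and Bayer--Teichmann-type proofs, and you correctly identify non-closedness of the cone as the crux rather than glossing over it. Three minor tightenings, none a gap: (i) you announce the induction as proving the ``key claim'' $m\in C$, but Case A actually delivers the full atomic conclusion; either run the induction on the full theorem statement, or note that the atomic measure obtained on $\om'=\{g=0\}$ yields $m=\sum_j m_j\,\Phi(\x_j)\in C$ directly, since the atoms lie in $\om'\subseteq\om$ and $\int_\om f\,d\mu=\int_{\om'}f\,d\mu$ for all $f\in V$. (ii) The separation step needs the proper-separation/supporting-hyperplane version of the theorem, since $m$ may lie in $\overline{C}\setminus C$; that version is what simultaneously guarantees $c\ne 0$ and $\langle c,\cdot\rangle\ge 0$ on the cone (the latter by scaling along rays), and deserves one explicit sentence. (iii) Your ``honest functions'' caveat is indeed forced by the statement, since point evaluations are meaningless on $L^1$-equivalence classes (Schm\"udgen's own hypothesis is a finite-dimensional space of genuine measurable integrable functions); the final step is then sound because linear independence in $L^1(\om,\mu)$ implies linear independence as pointwise functions, so $g\equiv 0$ does force $c=0$.
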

Tchakaloff's theorem  \cite{Tchakaloff} also states  \eqref{th-Richter-1}  but for $\om$ compact
and $V=\R[\x]_k$ (the  space of polynomials of degree at most $k$), a much more restrictive setting.

Tchakaloff's theorem is quite useful for moment problems and cubatures in numerical integration.
Indeed,  for instance  if one knows moments 
\[\mu_{\alpha}\,=\,\int_{\om} \x^{\alpha}\,d\mu\,=\,\int_{\om} x_1^{\alpha_1}\cdots x_d^{\alpha_d}\,d\mu
\,,\quad \alpha\in\N^d_n\,\]
up to degree-$n$, of an unknown measure $\mu$ on $\om\subset\R^d$,
then there exists a $k$-atomic measure $\nu$ on $\om$, supported on at most $s\leq {n+d\choose d}$ atoms 
$\x(1),\ldots,\x(s)\in\om$ and with same moments up to degree-$n$. Therefore one may construct \emph{cubatures}
supported on such points with positive weights $\gamma_1,\ldots,\gamma_s$. That is 
given a measurable function $f:\om\to\R$, one  approximates the integral $\int f\,d\mu$ with
\[\int f\,d\nu\,=\, \sum_{j=1}^s \gamma_j\,f(\x(j))\,,\]
with the guarantee that
\[\int p(\x)\,d\mu(\x)\,=\,\sum_{j=1}^s \gamma_j\,p(\x(j))\,,\quad\forall p\in\R[\x]_n\,,\]
where $\R[\x]_n$ is the space of polynomials with total degree up to $n$.

\subsection*{Historical notes} According to Dio and Schm\"udgen \cite[p. 11]{math-anal}, \emph{``The history of Richter's theorem is confusing and intricate and often the corresponding references in the literature are misleading."}
In \cite{math-anal} the authors mention that Rosenbloom \cite[Corollary 38e]{Rosenbloom} proved \eqref{th-Richter-1} 
for vector spaces $V$ of \emph{bounded} measurable functions. Rogosinski \cite[Theorem 1]{Rogosinski}
(submitted about a half year after Richter \cite{Richter}) also proved \eqref{th-Richter-1} for the one-dimensional case
but claims that his proof also works for general measurable spaces.  
They also precise that while Richter's result seems to treat only the one-dimensional case,  a closer look reveals that it covers the general case of measurable functions. Hence Tchakaloff's theorem in 1958 is a special case of Rosenbloom  \cite{Rosenbloom} in 1952, while Rogosinski and Richter proved the general case almost about at the same time.

\subsection*{Our main result}

\begin{prop}
\label{prop-main}
Let $(\om,\mu)$ be a measure space, with  $\mu$  a finite measure on $\om$ with mass $\mu(\om)>0$, and
let $f:\om\to\R$ be integrable with respect to $\mu$. Then 
there exist $\x_0,\x_1\in\om$ and $\lambda\in [0,1]$, such that
\begin{equation}
\label{eq:1}
\tau:=\int_\om f\,d\mu\,=\,
\mu(\om)\,(\lambda f(\x_0)+(1-\lambda)\,f(\x_1))\,,
\end{equation}
that is, $\tau/\mu(\om)$ is a convex combination of $f(\x_0)$ and $f(\x_1)$.
\end{prop}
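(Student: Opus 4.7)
The plan is to apply Theorem \ref{th-Richter} directly to a cleverly chosen two-dimensional subspace of $L^1(\om,\mu)$. Since $\mu$ is a finite measure, the constant function $\mathbf{1}$ lies in $L^1(\om,\mu)$ with $\int \mathbf{1}\,d\mu=\mu(\om)$, and by hypothesis $f\in L^1(\om,\mu)$. Therefore $V:=\mathrm{span}\{\mathbf{1},f\}\subset L^1(\om,\mu)$ is a linear subspace of dimension at most $2$.

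By Richter's theorem, there exists a $k$-atomic measure $\nu=\sum_{j=1}^k m_j\,\delta_{\x_j}\in M_+(\om)$ with $k\leq\mathrm{dim}(V)\leq 2$ and $m_j>0$, such that $\int g\,d\mu=\int g\,d\nu$ for every $g\in V$. Applying this identity to the two generators of $V$ yields the two equations
\begin{equation*}
\mu(\om)\,=\,\sum_{j=1}^k m_j,\qquad \int_\om f\,d\mu\,=\,\sum_{j=1}^k m_j\,f(\x_j).
\end{equation*}

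If $k=2$, I set $\lambda:=m_1/\mu(\om)\in(0,1)$, so $1-\lambda=m_2/\mu(\om)$, and the two equations combine to give exactly $\int_\om f\,d\mu=\mu(\om)(\lambda f(\x_1)+(1-\lambda)f(\x_2))$, which is \eqref{eq:1}. If $k=1$, then $\mu(\om)=m_1$ and $\int_\om f\,d\mu=m_1 f(\x_1)=\mu(\om)f(\x_1)$; I may take $\x_0=\x_1$ (with any $\x_1\in\om$ and arbitrary $\lambda$), recovering the classical MVT conclusion \eqref{mvt-basic} as a special case.

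There is essentially no technical obstacle beyond choosing the right subspace: the whole difficulty is packaged inside Richter's theorem, which allows $\om$ to be non-compact and requires only integrability, not continuity, of $f$. The only mild subtlety worth highlighting is that the dimension bound must really be the sharp one $k\leq 2$ (not $k\leq d+1$ or larger); this is exactly why a two-point convex combination suffices, regardless of the ambient dimension $d$ of $\om\subset\R^d$.
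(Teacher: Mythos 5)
Your proof is correct and follows essentially the same route as the paper: both apply Richter's theorem (Theorem \ref{th-Richter}) to the span of $\1$ and $f$ in $L^1(\om,\mu)$, obtain a $\nu$ supported on at most two atoms matching $\int \1\,d\mu$ and $\int f\,d\mu$, and normalize the weights by $\mu(\om)$ to get the convex combination \eqref{eq:1}. Your explicit treatment of the $k=1$ case is a minor refinement (the paper absorbs it by allowing a weight $a$ or $b$ to vanish), but the argument is the same.
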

\begin{proof}
Let $\1$ be the constant function equal to $1$ for all $\x\in\om$. Then
 \[\tau\,=\,\int_\om f\,d\mu\,;\quad\mu(\om)\,=\,\int_\om \1\,d\mu\,.\]
 Both $f$ and $\1$ are integrable w.r.t. $\mu$. Then by Richter's theorem 
 (\cite[Theorem 2.1.1, p. 39]{anastassiou} and \cite[Theorem 1.24, p.23]{schmudgen}),
 there exists an atomic (positive) measure $\nu:=a\,\delta_{\x_0}+b\,\delta_{\x_1}$ with $a,b\geq0$, supported on $2$ points $\x_0,\x_1\in\om$, and such that
 \begin{eqnarray*}
 \tau\,=\,\int_\om f\,d\nu&=&a\,f(\x_0)+b\,f(\x_1)\,;\\
 0\,<\,\mu(\om)\,=\,\int_\om \1\,d\nu&=&a+b\,.
 \end{eqnarray*}
 Then setting $\lambda:=a/\mu(\om)$ yields the desired result \eqref{eq:1}.
\end{proof}
 As the reader can see, the proof is  a direct consequence of Richter  Theorem \ref{th-Richter}.
 The price to pay for the extension \eqref{eq:1} of  \eqref{mvt-basic}
 to integrable functions and arbitrary measure spaces $(\om,\mu)$,
 is  relatively moderate. Indeed the $\mu$-average value of $f$ is now a convex combination of at most two   values of $f$ instead of a single value $f(\x_0)$ in \eqref{mvt-basic}.

  We share the opinion in \cite{schmudgen} that in contrast to (the far more restrictive) Tchakaloff's theorem
  quite cited in the literature on cubatures and the moment problem,
  Richter's theorem had been overlooked. For instance, quoting \cite[p. 41]{schmudgen}
  \emph{``Richter's paper has been ignored in the literature and a number of versions of his result have been reproved even recently."} This may explain why \eqref{eq:1} has not been stated already (at least in this simple form).
  
  \subsection*{Illustrative example}
  We end up this note by a simple illustrative toy example. Let $\om:=[0,1]$ and
 $\mu$ be the Lebesgue measure on $[0,1]$.
 Let $x\mapsto f(x):=1_{[0,1/2]}(x)+2\cdot 1_{(1/2,1]}(x)$. Hence
 \[\int_0^1f\,dx\,=\,1/2+2/2=3/2\,\not\in \mu(\om)\,f([0,1])=\{1,2\}\,.\]
 On the other hand, let $x_0\in [0,1/2]$ and $x_1\in (1/2,1]$, be fixed arbitrary. Then
 \begin{eqnarray*}
 \int_0^1 f\,d\mu\,=\,3/2&=&\mu(\om)\,(f(x_1)+f(x_2))/2\quad\mbox{[as in \eqref{eq:1}]}\\
 &=& \int_{\om} f\,d\nu\quad\mbox{with}\quad \nu=\frac{1}{2}\delta_{x_0}+\frac{1}{2}\delta_{x_1}\,.\end{eqnarray*}
  \section{Conclusion}\label{sec13}
We agree with \cite{schmudgen} that (the important) Richter's theorem has been overlooked in the literature,
which may explain why despite its simplicity and generality,
the above extension of the MVT has not appeared in this form
(at least to the best of our knowledge).

 \subsection*{Declaration and Acknowledgement}
There is no competing interest.
 
The  author  is supported by the Artificial and Natural Intelligence Toulouse Institute,
ANITI IA Cluster - ANR-23-IACL-0002-IACL - 2023.

 \end{document}